\numberwithin{equation}{section}
\newtheorem{theorem}{Theorem}[section]
\newtheorem{lemma}[theorem]{Lemma}
\newtheorem{proposition}[theorem]{Proposition}
\theoremstyle{definition}
\newtheorem{example}[theorem]{Example}
\newtheorem{remark}[theorem]{Remark}
\newcommand{\D}{\mathrm{d}}
\newcommand{\e}{\mathrm{e}}
\newcommand{\N}{\mathbb{N}}
\newcommand{\R}{\mathbb{R}}
\newcommand{\Z}{\mathbb{Z}}
\newcommand{\BB}{\mathcal{B}}
\newcommand{\KK}{\mathcal{K}}
\newcommand{\OO}{\mathcal{O}}
\newcommand{\sumoplus}{\!\raisebox{1.5ex}{{\scriptsize $\oplus$}}}
\begin{document}

\title{Local perturbations of potential well arrays}

\emsauthor{1}{
    \givenname{Pavel}
    \surname{Exner}
    \mrid{} 
    \orcid{} 
}{P.~Exner}

\emsauthor{2}{
    \givenname{David}
    \surname{Spitzkopf}
    \mrid{} 
    \orcid{} 
}{D.~Spitzkopf}
\Emsaffil{1}{
    \department{1}{Doppler Institute for Mathematical Physics and Applied Mathematics}
	\organisation{1}{Czech Technical University}
	\rorid{1}{}
	\address{1}{B\v rehov\'a 7, 11519 Prague, Czechia}
	\department{2}{Department of Theoretical Physics}
	\organisation{2}{NPI, Academy of Sciences}%
	\address{2}{25068 \v{R}e\v{z} near Prague}%
	\country{2}{Czechia}
	\affemail{2}{exner@ujf.cas.cz}}

\Emsaffil{2}{
	\department{1}{Department of Theoretical Physics}
	\organisation{1}{NPI, Academy of Sciences}%
	\address{1}{25068 \v{R}e\v{z} near Prague, Czechia}%
	\department{2}{Faculty of Mathematics and Physics}
	\organisation{2}{Charles University}%
	\address{2}{V Hole\v{s}ovi\v{c}k\'ach 2}%
	\zip{2}{18000}
	\city{2}{Prague}
	\country{2}{Czechia}
	\affemail{2}{spitzkopf@ujf.cas.cz}
}

\classification[2010]{81Q37, 35J10, 35P15}

\keywords{Schr\"odinger operators, periodic potential array, geometrically induced discrete spectrum}

\begin{abstract}
We consider an equidistant array of disjoint potential wells in $\R^\nu,\: \nu\ge 2$, built over a straight line, and show that, under a restriction on the potential support aspect ratio, a perturbation consisting of longitudinal shifts of a finite number of them preserving the disjointness gives rise to a nonempty discrete spectrum below the threshold of the lowest spectral band.
\end{abstract}

\maketitle

\section{Introduction}
\label{s:intr}

At a time, the existence and properties of the discrete spectrum coming from local perturbations of a periodic Schr\"odinger operator was a topic of intense interest. The one-dimensional situation was discussed, for instance, in \cite{Zh71, Fa89}, or \cite{GS93} where one can find references to an earlier work by Firsova and Rofe-Beketov. Its higher-dimensional counterparts with unperturbed potential being `fully periodic', i.e. having a bounded elementary cell, were analyzed in \cite{DH86, FK98}. A more general setting appeared in \cite{ADH89} where the unperturbed potential even need not be periodic and the perturbation was supposed to be sign-definite.

Recently a similar problem appeared again in connection with the investigation of geometric perturbations of \emph{soft waveguides}, cf.~\cite{EV24} and reference therein. In \cite{Ex24} an infinite array of disjoint and rotationally symmetric potential wells in dimensions $\nu=2,3$ was considered, the centers of which lay on a curve at equal arcwise distances; it was shown that if the curve is not straight but it is straight outside a compact, the corresponding Schr\"odinger operator has a nonempty discrete spectrum. Here we consider such an array, in any dimension $\nu\ge 2$ and without any symmetry, which remains straight and is subject to a local perturbation consisting of shifting a finite number of the wells preserving their disjoint character. This perturbation is much weaker than the one considered \cite{Ex24}; we are going to show that it can nevertheless produce eigenvalues at the bottom of the spectrum. Rather than an additive perturbation in the spirit of the older papers mentioned above, the effect may be regarded as a sort of substantially refined version of the potential well conspiracy \cite{KS79}.

Last not least, let us express our gratitude to the anonymous referee for careful reading of the manuscript, pointing out various minor inconsistencies, and suggesting an alternative way to prove Proposition~3.1.

\section{Problem setting}
\label{s:setting}

The potential wells in $\R^\nu$, $\nu\ge 2$, we are going to consider refer to a real-valued and \emph{nonnegative} function $V\in L^p(\Sigma_{\rho,R}(0))$, where $p=2$ for $\nu=2,3$ and $p>\frac12\nu$ for $\nu\ge 4$, and $\Sigma_{\rho,R}(0)=(-\rho,\rho)\times B_R(0)$ with $\rho,R>0$, $B_R(0)\subset\R^{\nu-1}$ being a ball centered at the origin. They will be obtained by shifts: given an infinite discrete set $Y=\{y_i\}\subset\R\times\R^{\nu-1}$, we denote by $V_j:\, x\mapsto V(x-y_j)$ the potential supported in the set $\overline{\Sigma_{\rho,R}(y_i)}$, where $\Sigma_{\rho,R}(y_i):= \Sigma_{\rho,R}(0)+y_i$. We do not require any particular symmetry of the `individual' potential, however, we always assume that the supports do not overlap; in the case we are most interested in, when the points of $Y$ are on a line identified with the first axis, it means that $\mathrm{dist}(y_i,y_j)> 2\rho$ if $i\ne j$. Excluding, of course, the trivial case $V=0$, our goal in this paper is to discuss Schr\"odinger operators
 \begin{equation} \label{hamilt}
H_{V,Y} = -\Delta - \sum_j V_j(x), \quad D(H_{V,Y})=H^2(\R^\nu),
 \end{equation}
for a particular family of the sets $Y$; we will use the shorthand $-V_Y$ for the potential term on the right-hand side of \eqref{hamilt}, cf.~Figure~\ref{fig:array}.
 \begin{figure}[t]
 \centering
 \includegraphics[scale=0.2]{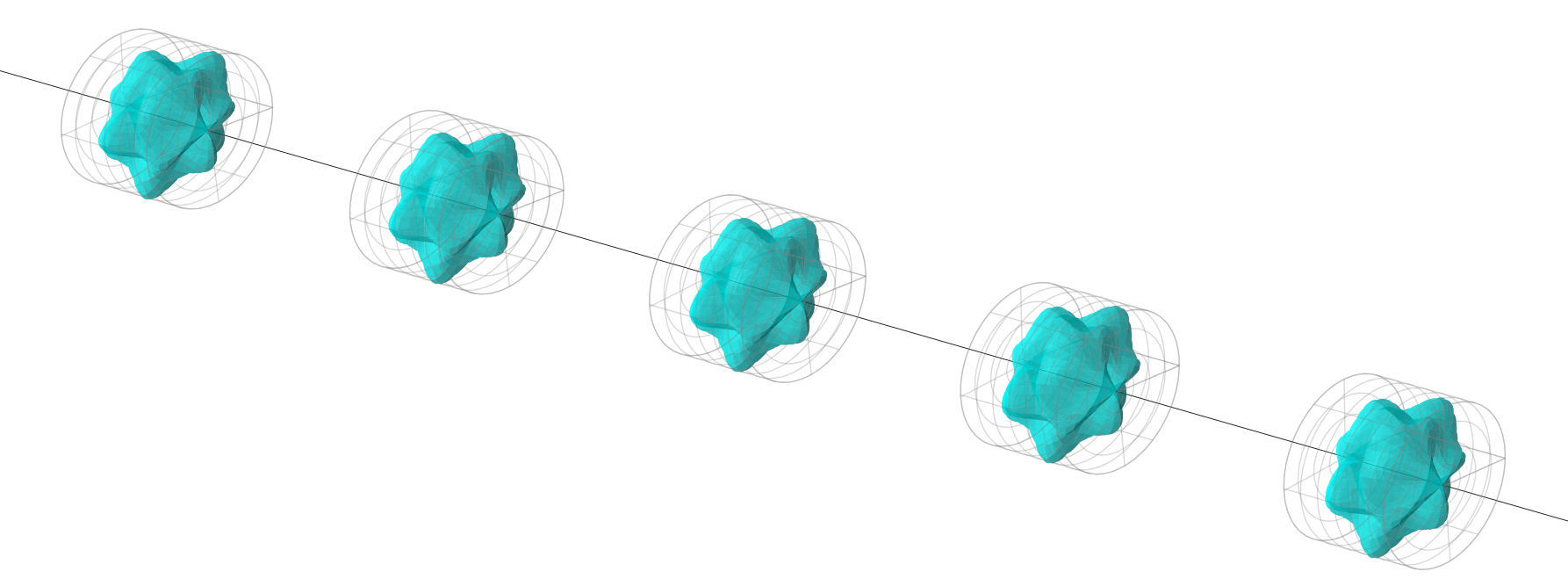}
 \caption{The potential array support with the indicated size restrictions of individual wells}
 \label{fig:array}
 \end{figure}
It is not difficult to check that $H_{V,Y}$ is self-adjoint and bounded from below, cf.~Proposition~\ref{prop:spess}.

As in \cite{Ex24}, the `unperturbed' system refers to the situation when the well centers form a straight equidistant array, $Y_0=\{(ja,0)\in\R\times\R^{\nu-1}:\: j\in\Z\}$ of a spacing $a>2\rho$. In the mentioned paper, however, we considered $\nu=2,3$, and what is more important, the potentials were supposed to be spherically symmetric and the points of $Y$ to lie on a non-straight but asymptotically straight curve with a fixed arcwise distance; here our primary focus is on arrays of points at the same line as $Y_0$ differing from the latter by changing positions of a finite number of them.

Recall that the spectrum of a single-well operator, which we denote as $H_V$, depends on the dimension. We have always $\sigma_\mathrm{ess}(H_V)=[0,\infty)$, however, while the discrete spectrum is always nonempty if $\nu=2$ \cite{Si76}, it is well known that in the higher-dimensional situation the existence of eigenvalues requires a critical strength, e.g., $\int_0^\rho V(r)\,r\D r > 1$ for $\nu=3$ \cite{JP51, Ba52}.

As we have said, the departing point of our discussion is the straight and equidistant array, $Y=Y_0$. In this case the spectrum of operator is purely essential; its properties are collected in the following proposition:
 \begin{proposition} \label{prop:straightline}
The operator $H_{V,Y_0}$ is self-adjoint and bounded from below. We have $\sigma(H_{V,Y_0})= \sigma_\mathrm{ess}(H_{V,Y_0})$, where the spectrum contains the interval $[0,\infty)$ and $\inf\sigma(H_{V,Y_0})<0$. In the negative part, the spectrum may or may not have gaps; their number is finite and does not exceed $\#\sigma_\mathrm{disc}(H_{V})$. This bound is saturated for the spacing $a$ large enough if $\nu=2$, in the case $\nu\ge 3$ there may be one gap less which happens if the potential is weak, i.e. for $H_{\lambda V,Y_0}$ with $\lambda>0$ sufficiently small.
 \end{proposition}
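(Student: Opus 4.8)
The plan is to use the invariance of $H_{V,Y_0}$ under the longitudinal translation $\tau_a:\,x\mapsto x+(a,0)$ and run the Floquet--Bloch analysis. Self-adjointness on $H^2(\R^\nu)$ and semiboundedness come from the general construction for admissible $Y$ (Proposition~\ref{prop:spess}), with periodicity playing no role there. The Floquet--Gelfand transform in $x_1$ then yields $H_{V,Y_0}\cong\int^{\oplus}_{[-\pi/a,\pi/a)}H(\theta)\,\D\theta$, where $H(\theta)=-\Delta-V_{Y_0}$ acts on $L^2\big((0,a)\times\R^{\nu-1}\big)$ with $\theta$-quasiperiodic conditions in the first variable, so that $\sigma(H_{V,Y_0})=\overline{\bigcup_\theta\sigma(H(\theta))}$. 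That the spectrum is purely essential is immediate: an isolated eigenvalue of finite multiplicity would have a finite-dimensional $\tau_a$-invariant eigenspace, hence an eigenvector of $\tau_a$ in it, i.e.\ a nonzero $L^2(\R^\nu)$ function that is quasiperiodic and therefore has the same norm on every period cell --- impossible; so $\sigma_\mathrm{disc}(H_{V,Y_0})=\emptyset$.

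Next, for $[0,\infty)\subseteq\sigma$: since $-V_{Y_0}$ has bounded transverse support and $V$ lies in the stated $L^p$-class, $-V_{Y_0}$ is relatively form-compact with respect to $-\Delta_\theta$ on the cell, whence $\sigma_\mathrm{ess}(H(\theta))=\sigma_\mathrm{ess}(-\Delta_\theta)=[\theta^2,\infty)$; the union of these half-lines over the Brillouin zone is $[0,\infty)$. For $\inf\sigma<0$ I would test $H(0)$ with a function $\psi(x)=g(x_\perp)$ constant in the longitudinal variable; its Rayleigh quotient equals that of the $(\nu-1)$-dimensional operator $-\Delta_\perp-\tfrac1aW$, where $W(x_\perp):=\int_\R V(x_1,x_\perp)\,\D x_1\ge0$ and $W\not\equiv0$, so $\inf\sigma(H_{V,Y_0})\le\inf\sigma\big(-\Delta_\perp-\tfrac1aW\big)$. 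For $\nu=2,3$ this is strictly negative for every such $W$ by the one- and two-dimensional bound-state criteria recalled in the introduction; for $\nu\ge4$ the same holds once $\tfrac1aW$ is supercritical in $\nu-1$ dimensions, which one has to assume (and which is in any case the only regime in which the rest of the paper has content).

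The substantial part is the bookkeeping of the negative bands. These form the negative part of $\sigma$, each being an eigenvalue of some $H(\theta)$ strictly below the fibre's essential edge $\theta^2\ge0$, and there are finitely many of them: $N_{<0}(H(\theta))$ is bounded uniformly in $\theta$ (ground-state comparison for the lowest branch, Neumann bracketing on the cell for the rest), and $N_{<0}(H(0))<\infty$ by a Birman--Schwinger or CLR-type estimate adapted to the cylinder $\mathbb{T}_a\times\R^{\nu-1}$. Since at most one gap lies between two consecutive negative bands and at most one more between the topmost one and $[0,\infty)$, the number of gaps does not exceed the number of negative bands.

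To replace the latter by $\#\sigma_\mathrm{disc}(H_V)$ is the delicate step, because the naive comparisons point the wrong way: the Green function of $-\Delta$ on the cylinder (or on the Neumann cell) dominates the Euclidean one term by term, so $H(0)$ typically has \emph{more} negative eigenvalues than $H_V$, the surplus being transverse modes carried by the longitudinally-constant Floquet mode and sitting just below $0$. The way out I would take: these ``threshold'' bands are of the form $\theta^2+\delta_n(a)+o(1)$ with $\delta_n(a)$ the low-lying eigenvalues of the $(\nu-1)$-dimensional $-\Delta_\perp-\tfrac1aW$, hence clustered near $0$ within $O(a^{-2})$; they overlap one another and reach up into $[0,\infty)$, so they produce no gap, while a Birman--Schwinger / bracketing comparison with the single well matches the remaining, isolated negative bands with $\sigma_\mathrm{disc}(H_V)$ --- giving at most $\#\sigma_\mathrm{disc}(H_V)$ gaps. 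For $\nu=2$ and $a$ large, $\sigma_\mathrm{disc}(H_V)\ne\emptyset$, its bands shrink to points lying strictly below the threshold block, and exactly $\#\sigma_\mathrm{disc}(H_V)$ gaps survive; for $\nu\ge3$ a shallow level of $H_V$ (or none at all) can be absorbed into the threshold block, closing the uppermost gap --- this is the ``one gap less'' alternative, and it occurs in particular for $H_{\lambda V,Y_0}$ with $\lambda$ small. I expect the principal obstacle to be precisely this separation of the ``threshold'' family from the ``tight-binding'' bands uniformly in $a>2\rho$: showing that the only bands that can sit isolated below $0$ are those descended from $\sigma_\mathrm{disc}(H_V)$, and that the near-threshold family always remains connected to the continuum, down to the smallest admissible spacing.
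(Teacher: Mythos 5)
Your proposal follows the same route as the paper: Floquet decomposition of $H_{V,Y_0}$ into fibers on the slab $S_a$, with the negative spectrum read off from the band functions. Several sub-arguments you give are in fact cleaner or more explicit than the paper's. The translation-invariance argument excluding isolated eigenvalues of finite multiplicity is more elementary than appealing to the band structure; the identification $\sigma_\mathrm{ess}(H(\theta))=[\theta^2,\infty)$ via relative form-compactness of the cell potential replaces the paper's transverse Weyl sequences and works equally well; and your reduction of the $\theta=0$ Rayleigh quotient to the $(\nu-1)$-dimensional operator $-\Delta_\perp-\frac1a W$ is a concrete version of what the paper only gestures at ("$\psi_0$ with suitable mollifiers"). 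On the gap-counting, you engage with the real difficulty — that the cell operator carries a cluster of near-threshold transverse bands in addition to the tight-binding bands descended from $\sigma_\mathrm{disc}(H_V)$, and that only the latter can produce gaps — which the paper's proof does not address at all beyond citing the band structure and \cite{Ex24}; both your treatment and the paper's leave this part at the level of a sketch, so you are not behind the paper here, though neither text constitutes a complete proof of the gap bounds.

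The one concrete shortfall relative to the stated proposition is the claim $\inf\sigma(H_{V,Y_0})<0$ for $\nu\ge 4$. Your longitudinally constant trial function reduces the question to binding for $-\Delta_\perp-\frac1a W$ in $\nu-1\ge 3$ dimensions, where weak coupling does not bind, and you explicitly convert this into an additional supercriticality hypothesis that the proposition does not make. The paper's intended argument is different: it uses the positive generalized eigenfunction $\psi_0$ at the threshold of the fiber $H_V(0)$, cut off by mollifiers, as a trial function for the full quadratic form, rather than a function constant in $x_1$; to close your proof for all $\nu\ge 2$ you would need either that construction or some other mechanism exploiting the full array rather than the averaged transverse potential. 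Everything else in your proposal is consistent with, and in places sharper than, the paper's own treatment.
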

\begin{proof}
By assumption the function $V$ belongs to $L^p(\Sigma_{\rho,R}(0))$ for any $p>\frac12\nu$ and $\nu\ge 2$, which means that the potential $V_{Y_0}$ satisfies
\begin{equation*}
\sup_{x\in\R^\nu} \int_{B_1(x)} V_{Y_0}(x')^p\,\D x' < \infty
\end{equation*}
and by \cite[Prop.~4.3]{AS82} it belongs to Kato class $\KK_\nu$, then the operator \eqref{hamilt} is self-adjoint and bounded from below by \cite[Thm.~A.2.7]{Si82}. An alternative, more explicit way to check the self-adjointness is to employ Theorem~X.63 of \cite{RS} approximating $H_{V,Y_0}$ by the sequence of cut-off operators
\begin{equation*}
H_{V,Y_0}^{(n)} = -\Delta - \sum_{|j|\le n} V_j(x),
\end{equation*}
each of which is self-adjoint by virtue of a simple Kato-Rellich-type argument. Instead of points $\pm i$ of the resolvent set in the said theorem we can use $-\kappa^2$ with $\kappa$ large enough because the sequence is non-increasing and $H_{V,Y_0}$ is semibounded as we shall see a little below. By the resolvent identity we get for such a $\kappa$
\begin{equation*}
(H_{V,Y_0}+\kappa^2)^{-1} - (H_{V,Y_0}^{(n)}+\kappa^2)^{-1} = (H_{V,Y_0}^{(n)}+\kappa^2)^{-1}\sum_{|j|>n} V_j\: (H_{V,Y_0}+\kappa^2)^{-1}
\end{equation*}
which is an operator with the norm not exceeding $2\big(\kappa^2+\inf\sigma(H_{V,Y_0})\big)^{-1}$; we denote it $D_n$. The resolvent kernels involved are positive, so that applying it to an arbitrary $\phi\in L^2(\R^\nu)$, we get the sequence of functions $D_n\phi$, non-increasing and converging pointwise to zero, hence $H_{V,Y_0}^{(n)} \to H_{V,Y_0}$ in the strong resolvent sense as $n\to\infty$. It remains to check that the sequence of
\begin{equation*}
A_n:= H_{V,Y_0} - H_{V,Y_0} = \sum_{|j|>n} V_j
\end{equation*}
has a densely defined weak graph limit, which is the case because for any $\psi\in C_0^\infty(\R^\nu)$ the sequence $\{A_n\psi\}$ converges weakly to zero.

A detailed proof of the essential spectrum claim was given in \cite{Ex24} for $\nu=2,3$; it carries over to higher $\nu$; here we sketch it only briefly to recall some notions we will need in the following. The inclusion $\sigma(H_{V,Y_0})\supset[0,\infty)$ is easy to check using a Weyl sequence of functions the supports of which are compact and disjoint moving away from the array axis; the negative spectrum is dealt with using Floquet decomposition, $H_{V,Y_0} = \int_\BB^\oplus H_V(\theta)\,\D\theta$, with $\BB=\big[\!-\!\frac{\pi}{a}, \frac{\pi}{a}\big)$ and the fiber $H_V(\theta)$ in $L^2(S_a)$, where $S_a:= J_a\times\R^{\nu-1}$ with $J_a:=\big(-\frac{a}{2}, \frac{a}{2}\big)$ is the periodicity cell, acting as $H_V = -\Delta - V$ on the domain
 \begin{align} \label{fiberdom}
D(H_V(\theta)) & = \Big\{ \psi\in H^2(S_a):\: \psi\big(\textstyle{\frac{a}{2}},x_\perp\big) = \e^{i\theta} \psi\big(-\textstyle{\frac{a}{2}},x_\perp\big) \;\; \text{and} \\ & \partial_{x_1}\psi\big(\textstyle{\frac{a}{2}},x_\perp\big) = \e^{i\theta} \partial_{x_1}\psi\big(-\textstyle{\frac{a}{2}},x_\perp\big) \;\;\text{for all}\;\; x_\perp\in\R^{\nu-1}\Big\}\,; \nonumber
 \end{align}
it is associated with the quadratic form
 \begin{equation} \label{fiberform}
Q_{V,\theta}[\psi] := \int_{S_a} \Big(\big|\big(-i\partial_{x_1} -\textstyle{\frac{\theta}{a}}\big)\psi(x)\big|^2 + |\nabla_{x_\perp}\psi(x)|^2 - V(x)|\psi(x)|^2\Big)\, \D x
 \end{equation}
with the domain consisting of $H^1(S_a)$ functions satisfying the periodic boundary conditions, $\psi\big(\frac{a}{2},x_\perp\big) = \psi\big(\!-\!\frac{a}{2},x_\perp\big)$. In view of the compact support of $V$, the negative spectrum of $H_V(\theta)$ consists of at most finite number of eigenvalues which are continuous functions of $\theta$ and their ranges constitute the spectral bands; this implies, in particular, that $H_{V,Y_0}$ is bounded from below. The spectral threshold of $H_{V,Y_0}$ corresponds to $\min_{\theta\in\BB} \inf\sigma(H_V(\theta))$; the corresponding eigenvalue is associated with the eigenfunction $\psi_0$ chosen to be real-valued and positive; its periodic extension denoted by the same symbol is the corresponding generalized eigenfunction of $H_{V,Y_0}$ referring to the threshold. Finally, to show that $\inf\sigma(H_{V,Y_0})<0$ holds always, one can use $\psi_0$ with suitable mollifiers to construct explicit trial functions making the value of the quadratic form \eqref{fiberform} negative.

In the particular case of the mirror-symmetric potential, $V(x_1,x_\perp)=V(-x_1,x_\perp)$, the lower and upper band edges correspond respectively to the center and the endpoint of $\BB$, or equivalently, to operators $H_V^\#$, $\#=\mathrm{N}, \mathrm{D}$, with Neumann and Dirichlet conditions; the eigenfunction $\psi_0$ then corresponds to the lowest eigenvalue of $H_V^\mathrm{N}$.
\end{proof}
 \begin{remark} \label{rem:critical}
If the potential is not sign-definite, the claim remains true except that $\inf\sigma(H_{V,Y_0})<0$ requires additionally $\int_{\Sigma_{\rho,R}(0)} V(x)\,\D x>0$. The existence of the negative spectrum for weak potentials in the dimensions \mbox{$\nu\ge 3$} does contradict the need of a critical strength to achieve \mbox{$\inf\sigma(H_V)<0$;} note that the spectral threshold converges in its absence to zero as $a\to\infty$. One naturally expects the spectrum of $H_{V,Y_0}$ to be absolutely continuous. The question about conditions under which this happens in the present case where the elementary cell, the slab $S_a$, is unbounded, is of independent interest, however, we will not pursue it here because it has no importance for the effect we are investigating.
 \end{remark}

\section{Preliminaries}
\label{s:prelim}

As indicated, the subject of our interests are perturbations of $H_{V,Y_0}$ which are \emph{local} in the sense that there is bounded set $P$ such that the symmetric difference $\Sigma(V_Y,V_{Y_0}):=\mathrm{supp}\,V_Y\,\Delta\, \mathrm{supp}\,V_{Y_0}\subset P$. To begin with, we have to check that the self-adjointness is preserved and so is the essential spectrum of the operator; note that for the purpose of this paper we just need that the essential spectrum \emph{threshold} remains the same.
 \begin{proposition} \label{prop:spess}
The operator $H_{V,Y}$ is self-adjoint on $H^2(\R^\nu)$ for any local perturbation and $\sigma_\mathrm{ess}(H_{V,Y})=\sigma_\mathrm{ess}(H_{V,Y_0})$.
 \end{proposition}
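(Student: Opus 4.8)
The plan is to reduce the claim to two separate facts: preservation of self-adjointness, and invariance of the essential spectrum, each handled by exploiting that the perturbation changes the potential only on the bounded set $P$. For self-adjointness, I would note that $V_Y$ differs from $V_{Y_0}$ only inside $P$; since both potentials are finite sums-plus-tail of translates of the same fixed $V\in L^p$, and shifting finitely many wells does not change the local $L^p$-integrability, $V_Y$ lies in the same Kato class $\KK_\nu$ as $V_{Y_0}$ (the uniform local $L^p$-bound used in the proof of Proposition~\ref{prop:straightline} is insensitive to a finite rearrangement of the wells). Hence self-adjointness on $H^2(\R^\nu)$ and semiboundedness follow verbatim from \cite[Prop.~4.3]{AS82} and \cite[Thm.~A.2.7]{Si82}, or alternatively from the same cut-off/strong-resolvent approximation argument already spelled out for $Y_0$, the point being that for $|j|$ large the shifted and unshifted wells coincide.

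For the essential spectrum, the natural tool is stability under relatively compact perturbations. Write $H_{V,Y} = H_{V,Y_0} + W$ with $W := V_{Y_0} - V_Y$, a function supported in the bounded set $P$ and lying in $L^p(P)$ with $p>\frac12\nu$. The standard fact is then that $W$ is $H_{V,Y_0}$-compact: indeed $W(H_{V,Y_0}+\kappa^2)^{-1}$ is compact because multiplication by an $L^p_{\mathrm{comp}}$ function composed with $(-\Delta+\kappa^2)^{-1}$ is compact in the relevant range of $p$ (Rellich-type / Birman–Schwinger compactness, e.g. via \cite[Thm.~X.20]{RS} or the Kato-class compactness statements in \cite{Si82}), and passing from $-\Delta$ to $H_{V,Y_0}$ only costs a bounded factor via the resolvent identity. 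Weyl's theorem on the invariance of the essential spectrum under relatively compact self-adjoint perturbations then gives $\sigma_\mathrm{ess}(H_{V,Y}) = \sigma_\mathrm{ess}(H_{V,Y_0})$ immediately, and in particular the threshold is unchanged, which is all that is needed later. As an alternative one could bypass relative compactness and argue directly with Weyl sequences: any singular sequence for $H_{V,Y_0}$ can be translated far from $P$ along the array axis so that it becomes a singular sequence for $H_{V,Y}$ with the same spectral parameter, and symmetrically in the other direction, yielding the two inclusions.

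The only genuinely delicate point is the compactness of $W(H_{V,Y_0}+\kappa^2)^{-1}$ when $\nu\ge 4$ and $V$ (hence $W$) is merely in $L^p$ with $p>\frac12\nu$ rather than continuous or bounded; here one must be a little careful about which mapping property of the free resolvent is invoked and combine the $L^p\to L^2$ Sobolev-type bound on $(-\Delta+\kappa^2)^{-1}$ with the boundedness of $P$ to extract compactness, rather than taking it for granted. Once this local compactness is in hand, everything else is routine bookkeeping, so I would state the compactness as a short lemma (or cite the corresponding Kato-class result) and keep the rest of the argument brief.
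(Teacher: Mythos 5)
Your proposal is correct and follows essentially the same two routes the paper itself offers: the paper gives a first proof by Weyl sequences (your sketched ``alternative'') and a second proof by compactness and Weyl's theorem (your main argument). The one point worth comparing is precisely the delicate step you flag, namely the compactness of $W(H_{V,Y_0}+\kappa^2)^{-1}$ for $\nu\ge 4$ with $V$ merely in $L^p$, $p>\frac12\nu$. The paper's second proof sidesteps this entirely: it writes the resolvent difference as $(H+i)^{-1}WP_n^\perp(H_0+i)^{-1}+(H+i)^{-1}WP_n(H_0+i)^{-1}$ with $P_n$ the projection onto $L^2(B_n(0))$, notes that $WP_n^\perp=0$ for $n$ large because $W$ is compactly supported, that $(H+i)^{-1}W$ is bounded (its adjoint being bounded by relative boundedness), and that $P_n(H_0+i)^{-1}$ is compact by Rellich--Kondrashov since $D(H_0)=H^2(\R^\nu)$; thus one never needs $W(H_0+i)^{-1}$ itself to be compact, and your proposed extra lemma becomes unnecessary. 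Two minor caveats on your sketched Weyl-sequence alternative: for the inclusion $\sigma_\mathrm{ess}(H_{V,Y_0})\subset\sigma_\mathrm{ess}(H_{V,Y})$ you must first arrange the singular sequence to have compact supports (the paper builds it explicitly from generalized eigenfunctions and mollifiers) before translating it away from $P$; and the reverse inclusion is not literally ``symmetric'', since $H_{V,Y}$ is not periodic and its Weyl sequences are not explicit --- the paper instead shows $\|(V_{Y_0}-V_Y)\psi_n\|\to 0$ using local uniform boundedness and pointwise decay of weakly null sequences on the compact set $\Sigma(V_Y,V_{Y_0})$ together with dominated convergence. Neither caveat affects your main argument, which is sound.
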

\begin{proof}
Concerning the self-adjointness and semiboundedness, the Kato class argument from the proof of Proposition~\ref{prop:straightline} applies to local perturbations $V_Y$ of $V_{Y_0}$ as well. To prove the spectral stability, we employ Weyl's criterion. To prove the inclusion $\sigma_\mathrm{ess}(H_{V,Y}) \supset \sigma_\mathrm{ess}(H_{V,Y_0})$ one can proceed as in \cite{Ex24} and construct to any $\lambda\in\sigma_\mathrm{ess}(H_{V,Y_0})$ a sequence $\{\psi_n\}\subset H^2(\R^\nu)$ which converges weakly to zero and $\|(H_{V,Y}-\lambda)\psi_n\|\to 0$ as $n\to\infty$. By assumption such sequences exist for $H_{V,Y_0}$ and one can construct their elements explicitly as products of the generalized eigenfunction of $H_{V,Y_0}$ corresponding to~$\lambda$ (given by the direct integral decomposition mentioned in the proof of Proposition~\ref{prop:straightline}) and suitable mollifiers in the $x_1$ variable; since the perturbation is local, in contrast to \cite{Ex24} we do not need the transverse parts of the mollifiers. Without loss of generality we may suppose that the functions $\psi_n$ have compact and pairwise disjoint supports; then we can use them to construct a Weyl sequence supported in a halfspace where $V_Y$ and $V_{Y_0}$ coincide.

The opposite inclusion follows again from Weyl's criterion, by which to any number $\lambda\in\sigma_\mathrm{ess}(H_{V,Y})$ a sequence $\{\psi_n\}\subset H^2(\R^\nu)$, weakly convergent to zero, for which $\|(H_{V,Y}-\lambda)\psi_n\|\to 0$ as $n\to\infty$. Now we do not know its explicit form, but we can check that $\{\psi_n\}$ is a Weyl sequence for $H_{V,Y_0}$ as well. Indeed, we have
\begin{equation*}
\|(H_{V,Y_0}-\lambda)\psi_n\| \le \|(H_{V,Y}-\lambda)\psi_n\| + \|(V_{Y_0}-V_Y)\psi_n\|.
\end{equation*}
The first term on the right-hand side tend to zero as $n\to\infty$ by assumption, for the second one we use the fact that the functions $\psi_n$ are continuous, then $\psi_n \overset{w}\longrightarrow 0$ implies that on any compact $\psi_n$ converges to zero pointwise being uniformly bounded \cite[Prop.~19.3.1]{Se71}. It follows that
\begin{equation*}
\|(V_{Y_0}-V_Y)\psi_n\| = \int_{\Sigma(V_Y,V_{Y_0})} |V_Y(x)-V_{Y_0}(x)|^2\, |\psi_n(x)|^2\, \D x \to 0
\end{equation*}
as $n\to\infty$ holds by dominated convergence theorem, taking into account the fact that $V_Y-V_{Y_0} \in L^2$ on $\Sigma(V_Y,V_{Y_0})$; this concludes the proof.
\end{proof}
\begin{proof} [Second proof]
There is alternative way to prove the claim\footnote{We thank the referee for suggesting the idea.} based on the following result: let $H_0$ be self-adjoint and $W$ be $H_0$-bounded with the bound less than one so that $H=H_0-W$ is also self-adjoint. Let further $\{P_n\}$ be a family of projections such that $\|WP_n^\perp\|\to 0$ as $n\to\infty$. If $P_n(H_0+i)^{-1}$ is compact for all $n\ge 1$, then $(H+i)^{-1}-(H_0+i)^{-1}$ is compact. Indeed, we have
 $$
(H+i)^{-1}-(H_0+i)^{-1} = (H+i)^{-1}WP_n^\perp(H_0+i)^{-1} + (H+i)^{-1}WP_n(H_0+i)^{-1}.
 $$
The operator $(H+i)^{-1}W$ is bounded because its adjoint is bounded, and consequently, the second term on the right-hand side is compact for all $n$. The first one converges to zero in the norm, so the norm limit of the whole expression is that of the second term which is naturally compact. In our case $H_0=H_{V,Y_0}$ and $W=V_{Y_0} - V_{Y}$ which is compactly supported. Choosing, for instance, $\mathrm{Ran}\,P_n = L^2(B_n(0))$, we have $WP_n^\perp=0$ for all $n$ large enough. At the same time, we know that $D(H_{V,Y_0})=H^2(\R^\nu)$, hence the compactness of $P_n(H_{V,Y_0}+i)^{-1}$ follows from the Rellich-Kondrashov theorem \cite[Theorem~{6.3}]{AF03}.
\end{proof}

The second preliminary item concerns the method to be used, based on the Birman-Schwinger principle; for a rich bibliography concerning this remarkable tool we refer to \cite{BEG22}. The main object is the family of operators
 \begin{equation} \label{BSop}
K_{V,Y}(z) := V_Y^{1/2} (-\Delta-z)^{-1} V_Y^{1/2}
 \end{equation}
in $L^2(\R^\nu)$ parametrized by $z\in\mathbb{C}\setminus\R_+$; for our purpose $z=-\kappa^2$ with $\kappa>0$ will be important. By assumption about the potential, the operator $K_{V,Y}(-\kappa^2)$ is positive and maps $L^2(\mathrm{supp}\,V_Y) \to L^2(\mathrm{supp}\,V_Y)$. Since $L^2(\mathrm{supp}\,V_Y) = \sum_j{\!\sumoplus}\: L^2(\Sigma_{\rho,R}(y_j))$, one can regard Birman-Schwinger operator \eqref{BSop} as matrix integral operator with the entries
 \begin{equation} \label{BSmatrix}
K_{V,Y}^{(i,j)}(-\kappa^2) := V_i^{1/2} (-\Delta+\kappa^2)^{-1} V_j^{1/2}
 \end{equation}
mapping $L^2(\Sigma_{\rho,R}(y_j))$ to $L^2(\Sigma_{\rho,R}(y_i))$. The Birman-Schwinger principle allows us to determine eigenvalues of $H_{V,Y}$ by inspection of those of $K_{V,Y}(-\kappa^2)$:
\begin{proposition} \label{prop:BS}
$z\in\sigma_\mathrm{disc}(H_{V,Y})$ holds if and only if $\,1\in\sigma_\mathrm{disc}(K_{V,Y}(z))$ and the dimensions of the corresponding eigenspaces coincide. The
operator $K_{V,Y}(-\kappa^2)$ is bounded for any $\kappa>0$ and the function $\kappa \mapsto K_{V,Y}(-\kappa^2)$ is continuously decreasing in $(0,\infty)$ with $\lim_{\kappa\to\infty}\|K_{V,Y}(-\kappa^2)\|=0$.
\end{proposition}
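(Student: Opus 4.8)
The plan is to reduce every assertion to a factorisation of the Birman--Schwinger operator. Since $V\in L^p(\Sigma_{\rho,R}(0))$ with $p>\frac12\nu$ and the wells are disjoint, the potential $V_Y$ is uniformly locally in $L^p$, hence form-bounded with respect to $-\Delta$ with relative bound zero: for every $a>0$ there is $b(a)>0$ such that $\int_{\R^\nu}V_Y|u|^2\le a\|\nabla u\|^2+b(a)\|u\|^2$ for all $u\in H^1(\R^\nu)$. Consequently, writing $R_\kappa:=(-\Delta+\kappa^2)^{-1}$, the operator $A_\kappa:=V_Y^{1/2}R_\kappa^{1/2}$ is bounded on $L^2(\R^\nu)$: for $v\in L^2(\R^\nu)$ one has $R_\kappa^{1/2}v\in H^1(\R^\nu)$ and, using $\|\nabla R_\kappa^{1/2}v\|^2\le\langle(-\Delta+\kappa^2)R_\kappa^{1/2}v,R_\kappa^{1/2}v\rangle=\|v\|^2$ and $\|R_\kappa^{1/2}v\|^2\le\kappa^{-2}\|v\|^2$, we get $\|A_\kappa v\|^2\le(a+b(a)\kappa^{-2})\|v\|^2$. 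Since $R_\kappa=R_\kappa^{1/2}R_\kappa^{1/2}$, this makes the formal expression in \eqref{BSop} precise as $K_{V,Y}(-\kappa^2)=A_\kappa A_\kappa^*$, a bounded nonnegative operator with $\|K_{V,Y}(-\kappa^2)\|=\|A_\kappa\|^2\le a+b(a)\kappa^{-2}$; letting first $\kappa\to\infty$ and then $a\to0$ gives $\lim_{\kappa\to\infty}\|K_{V,Y}(-\kappa^2)\|=0$.

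The dependence on $\kappa$ is read off from the first resolvent identity: for $0<\kappa_1<\kappa_2$ one has $R_{\kappa_1}-R_{\kappa_2}=(\kappa_2^2-\kappa_1^2)R_{\kappa_1}R_{\kappa_2}$, whence, evaluating on $\phi$ in the (dense) form domain where $A_\kappa^*\phi=R_\kappa^{1/2}V_Y^{1/2}\phi$,
\[
\langle(K_{V,Y}(-\kappa_1^2)-K_{V,Y}(-\kappa_2^2))\phi,\phi\rangle=(\kappa_2^2-\kappa_1^2)\,\|R_{\kappa_1}^{1/2}R_{\kappa_2}^{1/2}V_Y^{1/2}\phi\|^2\ge 0,
\]
so $\kappa\mapsto K_{V,Y}(-\kappa^2)$ is non-increasing. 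Writing the same difference as $(\kappa_2^2-\kappa_1^2)\,A_{\kappa_1}(R_{\kappa_1}^{1/2}R_{\kappa_2}^{1/2})A_{\kappa_2}^*$ and estimating the middle factor by $(\kappa_1\kappa_2)^{-1}$ yields $\|K_{V,Y}(-\kappa_1^2)-K_{V,Y}(-\kappa_2^2)\|\le C\,|\kappa_2^2-\kappa_1^2|$ with $C$ bounded on compact $\kappa$-intervals, i.e.\ norm continuity; together this is the asserted continuous decrease.

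For the Birman--Schwinger equivalence I would exhibit mutually inverse linear maps between $\ker(H_{V,Y}-z)$ and $\ker(K_{V,Y}(z)-I)$ for $z=-\kappa^2<0\in\rho(-\Delta)$. If $0\ne\psi\in H^2(\R^\nu)=D(H_{V,Y})$ solves $H_{V,Y}\psi=z\psi$, then $V_Y\psi\in L^2$ and $\psi=R_\kappa V_Y\psi$; putting $\phi:=V_Y^{1/2}\psi\in L^2$ (meaningful since $\psi\in H^1$) and using $V_Y\psi=V_Y^{1/2}\phi$ one finds $K_{V,Y}(z)\phi=V_Y^{1/2}\psi=\phi$, with $\phi\ne0$ because $\phi=0$ would force $V_Y\psi=0$ and hence $\psi=0$. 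Conversely, if $K_{V,Y}(z)\phi=\phi$ with $\phi\ne0$, set $\psi:=R_\kappa^{1/2}A_\kappa^*\phi\in H^1(\R^\nu)$; testing the form $h[\psi,u]:=\langle\nabla\psi,\nabla u\rangle-\int_{\R^\nu}V_Y\overline{\psi}u$ of $H_{V,Y}$ against $u\in H^1(\R^\nu)$ and using the identities $(-\Delta+\kappa^2)^{1/2}\psi=A_\kappa^*\phi$, $\ A_\kappa(-\Delta+\kappa^2)^{1/2}u=V_Y^{1/2}u$, $\ A_\kappa A_\kappa^*\phi=\phi$, one checks $h[\psi,u]+\kappa^2\langle\psi,u\rangle=0$ for all such $u$; the representation theorem for forms together with the known equality $D(H_{V,Y})=H^2(\R^\nu)$ then gives $\psi\in H^2(\R^\nu)$ and $H_{V,Y}\psi=z\psi$, and $\psi\ne0$ as $R_\kappa^{1/2}$ is injective and $A_\kappa^*\phi\ne0$. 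The assignments $\psi\mapsto V_Y^{1/2}\psi$ and $\phi\mapsto R_\kappa^{1/2}A_\kappa^*\phi$ are mutually inverse, so the eigenspaces have equal dimension; the transfer of isolatedness, hence the equivalence $z\in\sigma_\mathrm{disc}(H_{V,Y})\Leftrightarrow 1\in\sigma_\mathrm{disc}(K_{V,Y}(z))$, follows from the resolvent formula $(H_{V,Y}-z)^{-1}=R_\kappa+R_\kappa V_Y^{1/2}(I-K_{V,Y}(z))^{-1}V_Y^{1/2}R_\kappa$, valid precisely when $I-K_{V,Y}(z)$ is invertible, i.e.\ when $z\in\rho(H_{V,Y})$; see \cite{BEG22}.

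I expect the main difficulty to be the domain bookkeeping in the second half of the equivalence: one must make sure the candidate $\psi=R_\kappa^{1/2}A_\kappa^*\phi$ genuinely lies in $D(H_{V,Y})=H^2(\R^\nu)$ and satisfies the eigenvalue equation in the operator, not merely the form, sense, and that products such as $V_Y^{1/2}\psi$ and $V_Y^{1/2}\phi$ are honest $L^2$ functions. This is precisely where the hypothesis $V\in L^p$ with $p>\frac12\nu$ enters essentially---through relative form bound zero, boundedness of $A_\kappa$, and the identification $D(H_{V,Y})=H^2(\R^\nu)$---rather than mere membership in the Kato class; one must also keep in mind throughout that $V_Y^{1/2}$ is unbounded on $L^2(\R^\nu)$ and may be applied only to $H^1$ functions or kept inside the bounded operator $A_\kappa$.
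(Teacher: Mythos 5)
Your argument is correct, but for the quantitative assertions it takes a genuinely different route from the paper's. The paper does not invoke relative form-boundedness here: it decomposes $K_{V,Y}(-\kappa^2)$ into the block entries \eqref{BSmatrix}, shows each block is Hilbert--Schmidt (via the Sobolev inequality for $\nu=2,3$ and via a finite-rank argument based on the single-well Birman--Schwinger operator for $\nu>3$), and then obtains boundedness of the full operator from the matrix Schur test combined with the exponential off-diagonal decay $\|K_{V,Y}^{(i,j)}(-\kappa^2)\|\le c\,\e^{-\kappa a|i-j|}$ coming from the resolvent kernel; the limit $\|K_{V,Y}(-\kappa^2)\|\to 0$ is extracted from the finiteness of $\#\sigma_\mathrm{disc}(H_{\lambda V})$ together with kernel monotonicity, and the monotonicity in $\kappa$ from a derivative computation equivalent to your resolvent-identity argument. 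Your factorization $K_{V,Y}(-\kappa^2)=A_\kappa A_\kappa^*$ with $A_\kappa=V_Y^{1/2}R_\kappa^{1/2}$, resting on the infinitesimal form bound (which the potential does satisfy, being uniformly locally $L^p$ with $p>\frac12\nu$ and hence of Kato class, exactly the input used in the proof of Proposition~\ref{prop:straightline}), delivers boundedness, norm continuity, monotonicity and the vanishing as $\kappa\to\infty$ in one clean stroke, and your explicit eigenspace isomorphism is more detailed than the paper's bare citation of \cite{BGRS97}. What your route does not produce, and what the paper's blockwise treatment is really after, is the quantitative information reused later: the Hilbert--Schmidt property of the individual entries and, above all, the exponential decay of $\|K_{V,Y}^{(i,j)}(-\kappa^2)\|$ in $|i-j|$, which is the key input for Lemma~\ref{l:BSmollif} and for the absolute convergence of the series \eqref{trial_ineq2}. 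So yours is a valid and arguably slicker proof of the proposition as stated, while the paper's version is doing double duty as preparation for Section~4.
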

\begin{proof}
The first claim is a particular case of a more general and commonly known result, see, e.g., \cite{BGRS97}. Using the explicit form of $(-\Delta-z)^{-1}$ kernel mentioned above, one can check that each of the operators \eqref{BSmatrix} is Hilbert-Schmidt. This is obvious for $i\ne j$ because the kernel is bounded for $|x_1-x'_1|\ge a-2\rho$ and in view of the compact potential supports we have $V_j\in L^2(\Sigma_{\rho,R}(y_j))$ even for $\nu>3$. As for the diagonal entries, in low dimensions one can employ Sobolev inequality \cite[Sec.~IX.4]{RS}; for $\nu=3$ it applies directly, for $\nu=2$ we use the fact that $K_0(\kappa|x|)\le \frac{c}{\kappa|x|}$ holds for some $c>0$ on the potential support. For $\nu>3$ this argument no longer works, but we can get an even stronger result in a different way. It is enough to note that $K_{V,Y}^{(j,j)}(-\kappa^2)$ is nothing but the Birman-Schwinger operator of single-well Schr\"odinger operator $H_{V_j}$, unitarily equivalent to $H_V$. In view of the compact support, the operator $H_{\lambda V_j}$ has for any $\lambda>0$ a finite number of negative eigenvalues depending monotonously on $\lambda$, hence $K_{V,Y}^{(j,j)}(-\kappa^2)$ has a finite rank not exceeding $\#\sigma_\mathrm{disc}(H_V)$.

The full operator $K_{V,Y}(-\kappa^2) = \sum_{i,j\in\Z} K_{V,Y}^{(i,j)}(-\kappa^2)$ is no longer compact, of course, but it remains to be bounded as a consequence of the decay of the resolvent kernel. Since both $V_Y$ and the resolvent kernel are non-negative, we can estimate the corresponding quadratic form value by
\begin{equation*}
(\phi, K_{V,Y}(-\kappa^2)\phi) \le \sum_{i,j\in\Z} \big(|\phi_i|, K_{V,Y}^{(i,j)}(-\kappa^2) |\phi_j|\big) \le \sum_{i,j\in\Z} \|\phi_i\| \|K_{V,Y}^{(i,j)}(-\kappa^2)\| \|\phi_j\|
\end{equation*}
for any $\phi\in \sum_j{\!\sumoplus}\: L^2(\Sigma_{\rho,R}(y_j))$, where $\phi_j:= \phi|_{\Sigma_{\rho,R}(y_j)}$. To the series on the right-hand side of the inequality one can apply the matrix version of the Schur test \cite[Sec.~II.1]{Co97}. For $Y=Y_0$ are identically spaced which implies
\begin{equation} \label{matrnorm}
\sum_{i\in\Z} \|K_{V,Y_0}^{(i,j)}(-\kappa^2)\| = \sum_{j\in\Z} \|K_{V,Y_0}^{(i,j)}(-\kappa^2)\|
\end{equation}
independently of the value of the other index. As in the proof of Proposition~\ref{prop:spess} we use the decay of the resolvent kernel: there is a $c>0$ such that $\|K_{V,Y_0}^{(i,j)}(-\kappa^2)\| \le c\,\e^{-\kappa a|i-j|}$ which means that the series \eqref{matrnorm} converge and since $\|\phi\|^2 = \sum_{j\in\Z} \|\phi_j\|^2$, they provide a bound to $\|K_{V,Y_0}(-\kappa^2)\|$.

Let us next look at the difference $K_{V,Y}(-\kappa^2)-K_{V,Y_0}(-\kappa^2)$. Since the perturbation is local by assumption, there is an $N\in\N$ such that the matrix entries of the difference vanish for indices such that $\min\{|i|,|j|\}>N$. The part with $\max\{|i|,|j|\}\le N$ is a finite sum of bounded operators; it remains to check the boundedness of the parts with $|i|\le N$ and $|j|> N$, and vice versa. For brevity, we put $n_{ij}:= \|K_{V,Y}^{(i,j)}(-\kappa^2) -K_{V,Y_0}^{(i,j)}(-\kappa^2)\|$ and we employ again the exponential decay: since the array $Y$ is equidistant for \mbox{$|j|>N$}, a rough estimate gives $n_{ij}\le c\big(1+\e^{2\kappa aN}\big)\,\e^{-\kappa a|i-j|}$. For a fixed $i$ thus the series $\sum_{|j|>N} n_{ij}$ converges, and the maximum of the finite number of $i$ is finite; we denote it $m_1$. Next we take the sums $\sum_{|i|\le N} n_{ij}$ which in view of the  decay satisfy $\lim_{|j|\to\infty}\sum_{|i|\le N} n_{ij}<\infty$, and as a result, $m_2:=\sup_{|j|>N} \sum_{|i|\le N} n_{ij} <\infty$. By Schur test, the norm of the corresponding operator does not exceed $\sqrt{m_1m_2}$, and the same argument applies if we swap the roles of $i$ and $j$.

The continuity in $\kappa$ follows from the functional calculus and  we have
\begin{equation*} 
\frac{\D}{\D\kappa} (\psi,V_Y^{1/2} (-\Delta+\kappa^2)^{-1}\, V_Y^{1/2}\psi) = -2\kappa (\psi,V_Y^{1/2} (-\Delta+\kappa^2)^{-2}\, V_Y^{1/2}\psi) < 0
\end{equation*} 
for any nonzero $\psi\in L^2(\mathrm{supp}\,V_Y)$ which implies, in particular, the norm monotonicity with respect to $\kappa$. Finally, recalling that $H_{\lambda V}$ has for any $\lambda>0$ a finite number of negative eigenvalues and $\#\sigma_\mathrm{disc}(H_{\lambda' V}) \ge \#\sigma_\mathrm{disc}(H_{\lambda V})$ holds for $\lambda'\ge\lambda$, we infer that $\lim_{\kappa\to\infty}\|K_{V,Y}^{(i,i)}(-\kappa^2)\|=0$. Indeed, in the opposite case at least the largest eigenvalue of $K_{V,Y}^{(i,i)}(-\kappa^2)$ would have a positive limit as $\kappa\to\infty$ which is by Birman-Schwinger principle in contradiction with the monotonicity of $\lambda\mapsto \#\sigma_\mathrm{disc}(H_{\lambda V})$. Using further the monotonicity of the resolvent kernel w.r.t. $|x-x'|$ we finally get $\|K_{V,Y}^{(i,j)}(-\kappa^2)\| \le \|K_{V,Y}^{(i,i)}(-\kappa^2)\|$ for $i\ne j$ which completes the proof.
\end{proof}

\section{Eigenvalues at the bottom of the spectrum}
\label{s:bottomev}

As indicated, we are interested in eigenvalues at the bottom of the spectrum of $H_{V,Y}$ due to changes of positions of a finite number of potential wells. Let us first note that some perturbations of that type keep the spectral threshold preserved:
\begin{example} \label{ex:puretrans}
Consider a potential which is mirror-symmetric, $V(x_1,x_\perp)=V(-x_1,x_\perp)$, and subjected to \emph{purely transversal shifts}, that is, suppose that the first Cartesian components of $y_j=(ja,y_{j,\perp})$ and $y_j^{(0)}=\big(ja,y_{j,\perp}^{(0)}\big)$ are the same for all $j\in\Z$. Let $H_{V,Y}^\mathrm{N}$ be obtained from $H_{V,Y}$ by imposing the additional Neumann conditions at the planes $\big\{(j+\frac12a,x_\perp):\, j\in\Z, x_\perp\in\R^{\nu-1}\big\}$. We know from the proof of Proposition~\ref{prop:straightline} that under the assumed symmetry the generalized eigenfunction $\psi_0$ corresponding to the spectral threshold of $H_{V,Y_0}$ is the periodic extension of the ground-state eigenfunction of $H_V^\mathrm{N}$ on $S_a$ which means that $\inf\sigma(H_{V,Y_0})=\inf\sigma(H_V^\mathrm{N})$. However, both $H_{V,Y}^\mathrm{N}$ and $H_{V,Y_0}^\mathrm{N}$ are unitarily equivalent to $\sum{\sumoplus}_{\!\!\!\!j\in\Z}\, H_V^\mathrm{N}$, or alternatively, they are unitarily equivalent to each other; the respective transformation consists of shifting the transverse coordinate by $y_{j,\perp}-y_{j,\perp}^{(0)}$ in the $j$th copy of $S_a$. Consequently, their spectra are the same, and combining this fact with Neumann bracketing \cite[Sec.~XIII.15]{RS} we get
\begin{equation} \label{infspess}
\inf\sigma(H_{V,Y}) \ge \inf\sigma(H_{V,Y}^\mathrm{N}) = \inf\sigma(H_{V,Y_0}^\mathrm{N}) = \inf\sigma(H_{V,Y_0}).
\end{equation}
If $Y$ is now a local perturbation of $Y_0$ of the considered type, we have
\begin{equation} \label{infspess2}
\inf\sigma(H_{V,Y_0}) = \inf\sigma_\mathrm{ess}(H_{V,Y_0}) = \inf\sigma_\mathrm{ess}(H_{V,Y})
\end{equation}
by Propositions~\ref{prop:straightline} and \ref{prop:spess}, which means that there are no eigenvalues at the bottom of the spectrum of $H_{V,Y}$.

Note that inequality \eqref{infspess} does not require locality of the perturbation and one may wonder whether shifting the well centers transversally, say, to the broken lines $\{(x_1,c|x_1|,0):\, x_1\in\R\}$ with some $c>0$, we are not in conflict with the result of \cite{Ex24} establishing the existence of discrete spectrum for such a bent array of rotationally symmetric potental wells. The answer is no, of course, because while \eqref{infspess} remains valid, the last identity in \eqref{infspess2} can no longer be used: in the tilted array the distances between the neighboring wells increase and, as a result, the threshold of $\sigma_\mathrm{ess}(H_{V,Y})$ increases as well.
\end{example}

The situation is different is the perturbation consists of \emph{longitudinal} shifts, that is, $y_{j,\perp}=0$ for all $j\in\Z$ and $y_{j,1}\ne ja$ for a finite (and nonempty) subset of the indices~$j$. Without loss of generality we may suppose that the order of the points remains preserved, $y_{j,1}<y_{j+1,1}$ for all $j\in\Z$.

\begin{theorem} \label{thm:local_pert_bs}
In the described situation, $\sigma_\mathrm{disc}(H_{V,Y})\ne\emptyset$ holds provided $R\le \rho\sqrt{\nu-1}$.
\end{theorem}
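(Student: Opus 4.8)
The plan is to use the Birman–Schwinger principle from Proposition~\ref{prop:BS}: since $\inf\sigma_\mathrm{ess}(H_{V,Y})=\inf\sigma(H_{V,Y_0})=:-\kappa_0^2<0$, it suffices to produce $\kappa>\kappa_0$ such that $K_{V,Y}(-\kappa^2)$ has an eigenvalue strictly larger than $1$, or more precisely such that $\|K_{V,Y}(-\kappa^2)\|>1$ while $\|K_{V,Y_0}(-\kappa^2)\|\le 1$; by the continuity and monotonicity of $\kappa\mapsto K_{V,Y}(-\kappa^2)$ this forces a crossing through $1$ at some $\kappa'\in(\kappa_0,\kappa)$, hence an eigenvalue of $H_{V,Y}$ at $-\kappa'^2<-\kappa_0^2$. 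The natural benchmark is $\kappa=\kappa_0$: at the essential-spectrum threshold, $\|K_{V,Y_0}(-\kappa_0^2)\| = 1$ (this is exactly the Birman–Schwinger reformulation of $-\kappa_0^2=\inf\sigma(H_{V,Y_0})$ being a band edge, with the periodic generalized eigenfunction $\psi_0$ providing the approximate eigenvector). So the whole theorem reduces to the strict inequality
\begin{equation} \label{goal}
\|K_{V,Y}(-\kappa_0^2)\| > \|K_{V,Y_0}(-\kappa_0^2)\|,
\end{equation}
i.e.\ moving finitely many wells longitudinally strictly increases the Birman–Schwinger norm at the threshold energy.

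The key computation is a comparison of quadratic forms. Write $G_\kappa$ for the kernel of $(-\Delta+\kappa^2)^{-1}$, which is positive and strictly radially decreasing. For a trial vector $\phi=\{\phi_j\}$ with $\phi_j\in L^2(\Sigma_{\rho,R}(0))$ (the same profile in each well, transported to the shifted centers) one has
\begin{equation} \label{formdiff}
(\phi,K_{V,Y}(-\kappa_0^2)\phi) = \sum_{i,j} \int\!\!\int \overline{(V^{1/2}\phi_i)(x)}\, G_{\kappa_0}\big(x-x'+y_i-y_j\big)\, (V^{1/2}\phi_j)(x')\, \D x\, \D x',
\end{equation}
and the analogous expression with $y_i-y_j$ replaced by $(i-j)a e_1$ gives $(\phi,K_{V,Y_0}(-\kappa_0^2)\phi)$. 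The idea is to take $\phi$ built from the genuine threshold data — the restrictions of $\psi_0$ (times appropriate cutoffs) to the unperturbed wells near the perturbed region, so that for $Y_0$ the form value is essentially $\|\phi\|^2$ up to a controllable error — and then observe that in the off-diagonal terms involving a shifted pair, the argument of $G_{\kappa_0}$ changes. Because a longitudinal shift preserving disjointness and order can only \emph{decrease} some of the center-to-center distances $|y_i-y_j|$ between neighbours on one side of a moved well while increasing others, one needs the geometric input to conclude that the net effect is a strict increase of the double sum. This is where the hypothesis $R\le\rho\sqrt{\nu-1}$, equivalently $\rho^2+(\nu-1)^{-1}$-type constraint on the aspect ratio, must enter: it guarantees that the relevant separations $y_i-y_j$ stay in the region where the $x_1$-direction dominates, so that $G_{\kappa_0}(x-x'+y_i-y_j)$ really behaves like a decreasing function of the longitudinal gap uniformly over $x,x'$ ranging in the (thin enough) well cross-sections, and a shift that brings two wells closer strictly raises the kernel on the whole product of supports.

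The main obstacle I anticipate is precisely making \eqref{formdiff} yield a \emph{strict} net gain rather than a wash: a longitudinal shift of a single well towards its left neighbour simultaneously moves it away from its right neighbour, and one must show the gained positivity from the shortened bond outweighs the lost positivity from the lengthened bonds, including all the longer-range terms. The cleanest route is probably to shift one extremal well (or to argue by a one-parameter family $Y_t$ interpolating $Y_0\to Y$ and differentiate the form at $t=0$), reducing matters to a sign of $\partial_t$ of the double sum; convexity/monotonicity of $s\mapsto G_{\kappa_0}$ along the line through the supports, secured by $R\le\rho\sqrt{\nu-1}$, then gives the strict sign. One also has to handle the fact that $\psi_0$ is not compactly supported, so the trial vector requires a mollifier in $x_1$ and one must check the truncation error is $o(1)$ and does not swallow the gain — this is routine given the exponential decay of $\|K_{V,Y_0}^{(i,j)}\|$ established in Proposition~\ref{prop:BS}, but it is the technical heart of turning the heuristic into a proof.
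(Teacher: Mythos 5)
Your framework is the right one and matches the paper's: reduce via the Birman--Schwinger principle to showing that the quadratic form of $K_{V,Y}(-\kappa_0^2)$ on a trial vector built from the threshold generalized eigenfunction $\psi_0$ (cut off by a mollifier whose error is controlled using the exponential decay of $\|K^{(i,j)}_{V,Y_0}\|$) strictly exceeds the norm squared. But the step you yourself flag as ``the main obstacle'' --- why the gains from shortened bonds strictly outweigh the losses from lengthened ones --- is exactly the mathematical content of the theorem, and neither of your suggested resolutions works as stated. First, your reading of the hypothesis $R\le\rho\sqrt{\nu-1}$ is off: the resolvent kernel is radially decreasing, so $G_{\kappa_0}(x-x'+y_i-y_j)$ is a decreasing function of the longitudinal gap \emph{unconditionally}; no aspect-ratio restriction is needed for monotonicity, and monotonicity alone gives nothing because the losses and gains could cancel. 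Second, the interpolation idea ``differentiate the form at $t=0$ and read off the sign of $\partial_t$'' fails because that first derivative \emph{vanishes}: writing $\delta_r:=(y_{r+1}-y_r)_1-a$ and $\eta_{ij}:=\sum_{r=\min(i,j)}^{\max(i,j)-1}\delta_r$, locality gives $\sum_r\delta_r=0$, hence $\sum_j\eta_{j+k,j}=k\sum_r\delta_r=0$ for each fixed $k=i-j$; since the unperturbed kernel value depends on $(i,j)$ only through $k$, the first-order term is a sum of $R'_{\kappa}(ka+\cdots)\sum_j\eta_{j+k,j}=0$. To first order the perturbation \emph{is} a wash.

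The two missing ideas, which constitute the paper's proof, are: (i) the zero-mean identity above, which shows the effect is second order and sets up a Jensen-type argument; and (ii) a convexity lemma for the \emph{composed} map $x\mapsto R_\kappa\big(\sqrt{(b+x)^2+c^2}\,\big)$, which --- unlike the convexity of $R_\kappa$ itself --- is \emph{not} automatic, because the longitudinal shift enters the kernel only through the Euclidean distance including the transverse offset $c=|\xi_\perp-\xi'_\perp|\le 2R$. The paper shows $R''_\kappa(z)/|R'_\kappa(z)|>(\nu-1)/z$ using Macdonald-function identities, and compares this with $z''/(z')^2=c^2/(z(b+x)^2)$ to get strict convexity on $(0,\infty)$ precisely when $c\le b\sqrt{\nu-1}$; applied with $b=2\rho$ (minimal gap between neighbouring supports) and $c=2R$, this is where $R\le\rho\sqrt{\nu-1}$ enters. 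Jensen's inequality over each fixed $k$, together with the zero mean of the $\eta_{ij}$ and strictness of the convexity, then yields the strict positivity of $\sum_{i,j}\big(\phi_0,[K^{(i,j)}_{V,Y}-K^{(i,j)}_{V,Y_0}]\phi_0\big)$ whenever some $\eta_{ij}\ne 0$. Without (i) and (ii) your argument does not close, so as it stands the proposal has a genuine gap at its core.
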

\begin{proof}
Let $\kappa_0$ be the spectral parameter value referring the spectral threshold of unperturbed system, $\inf\sigma(H_{V,Y_0})= -\kappa_0^2$; in view of Proposition~\ref{prop:BS} it is enough to check that there is a $\kappa>\kappa_0$ such that $K_{V,Y}(-\kappa^2)$ has eigenvalue one which happens provided
$\sup\sigma(K_{V,Y}(-\kappa_0^2))>1$. Thus we have to find a trial function $\phi\in L^2(\mathrm{supp}\,V_Y)\subset L^2(\R^\nu)$ such that
 \begin{equation} \label{BStrial}
(\phi, K_{V,Y}(-\kappa_0^2)\phi) > \|\phi\|^2\,;
 \end{equation}
the left-hand side can be written explicitly as
\begin{equation} \label{BSform_expl}
\sum_{i,j\in\Z} \int_{\Sigma_{\rho,R}(y_i)\times \Sigma_{\rho,R}(y_j)}  \overline\phi(x)\, V_i^{1/2}(x)\, R_{\kappa_0}(x,x')\, V_j^{1/2}(x')\,\phi(x')\,\D x\,\D x'.
\end{equation}
As usual in such situations we employ the generalized eigenfunction $\psi_0$ associated with the bottom of the essential spectrum of $H_{V,Y_0}$ as the starting point; the corresponding generalized eigenfunction of $K_{V,Y_0}$ referring to the upper edge of its spectrum is $\phi_0:=V_{Y_0}^{1/2}\psi_0$. We will use the symbol $\phi_0$ also for the family
 \begin{equation} \label{BSunmolif}
\{\phi_{0,j}\} \in \sum_{j\in\Z}\sumoplus\: L^2(\Sigma_{\rho,R}(y_j)), \quad \phi_{0,j}(\xi+y_j):=\phi_0(\xi)\;\;\text{for}\;\, \xi\in \Sigma_{\rho,R}(0)\,;
 \end{equation}
the excuse for this abuse of notation is that it is convenient when we write the expression to be estimated in the form \eqref{BSform_expl}. Recall that the `original' function $\psi_0$ could be chosen real-valued and positive; the same applies \emph{mutatis mutandis} to $\phi_0$. By construction, $\phi_0$ is symmetric with respect to the discrete translations; it may have other symmetries inherited from those of the potential $V$.

The function \eqref{BSunmolif} naturally does not belong to $L^2(\mathrm{supp}\,V_Y)$; to make it a viable trial function, we have as usual to multiply it by a suitable family of mollifiers. To this aim, we will employ the functions
 \begin{equation} \label{BSmollif}
h_n(x) = \sum_{i\in\Z} h_{n,i} \chi_{\Sigma_{\rho,R}(y_i)}(x)
 \end{equation}
with the coefficients chosen, for instance, as
 \begin{equation} \label{mollifcoef}
h_{n,i} = \frac{n^2}{n^2+i^2}\,, \quad i\in\Z\,.
 \end{equation}
Before proceeding we have to make sure that the effect of the mollifier can be made arbitrarily small.
\begin{lemma} \label{l:BSmollif}
$(h_n\phi_0, K_{V,Y_0}(-\kappa_0^2)h_n\phi_0) - \|h_n\phi_0\|^2=\OO(n^{-2})$ as $n\to\infty$.
\end{lemma}
\begin{proof}
The first term of the above difference equals
\begin{equation*}
\sum_{i\in\Z} \sum_{j\in\Z} \int_{\Sigma_{\rho,R}(y_i)} \! \D x\, \phi_{0,i}(x) \int_{\Sigma_{\rho,R}(y_j)} \! V_Y^{1/2}(x)\, (-\Delta+\kappa_0^2)^{-1}(x,x')\, V_Y^{1/2}(x') \phi_{0,j}(x')\,\D x'
\end{equation*}
To simplify the notation for a moment, denote $K_{V,Y_0}(-\kappa_0^2)$ as $K$ and its matrix operator entries as $K_{ij}$. For $\kappa=\kappa_0$ we have $K\phi_0=\phi_0$ so that $\|h_n\phi_0\|^2 = (h_n\phi_0, h_n K\phi_0)$ and the difference in question equals $(\phi_0,h_n[K,h_n]\phi_0)$, or more explicitly
\begin{equation*}
D = \sum_{i,j\in\Z} h_{n,i} \big(h_{n,j}-h_{n,i}\big) M_{ij}\,,
\end{equation*}
where $M_{ij}= (\phi_{0,i}, K_{ij}\phi_{0,j})$; recall that the $h_{n,i}$'s are just numbers. The $\phi_{0,i}$'s are shifted copies of the same function and in view of the exponential decay of the resolvent kernel we infer that there is a positive number $c$ such that $|M_{ij}|\le c\,\e^{-\kappa a|i-j|}$. Using the explicit form of the coefficients $h_{n,i}$, we can rewrite the expression as
\begin{equation*}
D = 4n^2 \sum_{i,j=0}^\infty \frac{i^2-j^2}{(n^2+i^2)^2(n^2+j^2)}\,M_{ij}\,.
\end{equation*}
Passing to the summation over $k=i-j$ and $l=i+j$, we get
 \begin{equation*}
D = 4n^2 \sum_{k\in\Z} \sum_{l=1}^\infty \frac{kl}{\big(n^2+\frac14(k+l)^2\big)^2 \big(n^2+\frac14(k-l)^2\big)}\,M_{ij}
\end{equation*}
or
\begin{equation*}
D = -4n^2 \sum_{k=1}^\infty \sum_{l=1}^\infty \frac{k^2l^2}{\big(n^2+\frac14(k+l)^2\big)^2 \big(n^2+\frac14(k-l)^2\big)^2}\,M_{ij}\,.
\end{equation*}
Hence the difference is negative and estimating the denominator from below by $\frac{1}{16}\,l^4n^4$, we arrive at the bound
\begin{equation*}
|D| \le \frac{64c}{n^2} \: \sum_{l=1}^\infty l^{-2} \: \sum_{k=1}^\infty k^2\,\e^{-\kappa ak},
\end{equation*}
which yields the sought result.
\end{proof}

\begin{remark}
The proof of Theorem~2.6 in \cite{Ex24} contains a gap: one has to check that the positivity of the curvature-induced perturbation survives in the limit when the mollifier is removed. Instead of checking this directly, one can replace the cut-off in Lemma~3.7 there by \eqref{BSmollif} with the sequence \eqref{mollifcoef} for which the trial function family converges \emph{pointwise} to the generalized eigenfunction at the threshold; recall that the function $\phi_0^Y$ used there can be unitarily mapped to the $\phi_0$ of the above lemma.
\end{remark}

\noindent \emph{Proof of Theorem~\ref{thm:local_pert_bs}, continued:} We want thus to prove that
\begin{equation*}
(h_n\phi_0, K_{V,Y}(-\kappa_0^2)h_n\phi_0) - \|h_n\phi_0\|^2 > 0
\end{equation*}
holds for all $n$ large enough. Adding and subtracting $(h_n\phi_0, K_{V,Y_0}(-\kappa_0^2)h_n\phi_0)$ to the left-hand side and using the above lemma, this will be true if
 \begin{equation} \label{trial_ineq}
\lim_{n\to\infty} (h_n\phi_0, K_{V,Y}(-\kappa^2)h_n\phi_0) - (h_n\phi_0, K_{V,Y_0}(-\kappa^2)h_n\phi_0) > 0.
\end{equation}
would hold for $\kappa=\kappa_0$, and certainly if we verify \eqref{trial_ineq} for any $\kappa>0$. As in the proof of Proposition~\ref{prop:BS}, due to the local character of the perturbation we can write the sought relation more explicitly,
\begin{equation*}
\lim_{n\to\infty} \:2\!\!\! \sum_{(i,j)\in P_N} h_{n,i}h_{n,j}\, \big(\phi_0, \big[K_{V,Y}^{(i,j)}(-\kappa^2) - K_{V,Y_0}^{(i,j)}(-\kappa^2)\big]\phi_0\big)>0
\end{equation*}
for some $N\in\N$, the summation running over $P_N=P_N^< \cup P_N^>$ with
\begin{align*}
P_N^<:= & \{(i,j): -N\le i<j\le N\}\,, \\[.2em]
P_N^>:= & \{(i,j): i<N\:\&\:|j|\le N \; \text{or}\; |i|\le N\:\&\:j>N\}\,;
\end{align*}
recall that $K_{V,Y}^{(i,i)}(-\kappa^2)= K_{V,Y_0}^{(i,i)}(-\kappa^2)$ and $K_{V,Y}^{(i,j)}(-\kappa^2)=K_{V,Y}^{(j,i)}(-\kappa^2)$. Since the perturbation is local by assumption, for a sufficiently large $N$ the contributions to the sum from the index pairs outside $P_N$ are zero.

The set $P_N^<$ is finite and the series $\sum_{(i,j)\in P_N^>}\, (\phi_0,K_{V,Y_0}^{(i,j)}(-\kappa^2)\phi_0)$ converges absolutely in view of the exponential decay of $|M_{ij}|$ observed in the proof of the previous lemma; the same applies to the series in which $K_{V,Y_0}^{(i,j)}(-\kappa^2)$ is replaced by $K_{V,Y}^{(i,j)}(-\kappa^2)$. This allows us to use the dominant convergence theorem and exchange the limit with the double sum; as a result one has to check positivity of the series
 \begin{equation} \label{trial_ineq2}
\sum_{i,j\in\Z}\big(\phi_0, \big[K_{V,Y}^{(i,j)}(-\kappa^2) - K_{V,Y_0}^{(i,j)}(-\kappa^2)\big]\phi_0\big).
\end{equation}
To estimate the sum we use their explicit expression of its terms, namely
 \begin{align*}
& \int_{\Sigma_{\rho,R}(0)} \int_{\Sigma_{\rho,R}(0)} \phi_0(\xi) V^{1/2}(\xi) \big[ R_\kappa(|y_i-y_j +\xi-\xi'|) - R_\kappa(|y_i^{(0)}\!-y_j^{(0)}\! +\xi-\xi'|) \big] \\[.3em] & \qquad \times V^{1/2}(\xi') \phi_0(\xi') \,\D\xi\,\D\xi' \\[.3em]
& = \int_{\Sigma_{\rho,R}(0)} \int_{\Sigma_{\rho,R}(0)} \phi_0(\xi) V^{1/2}(\xi) \big[ R_\kappa(|(i-j)an_1 + \eta_{ij}n_1 +\xi-\xi'|) \\[.3em] & \qquad - R_\kappa(|(i-j)an_1+\xi-\xi'|) \big]\, V^{1/2}(\xi') \phi_0(\xi') \,\D\xi\,\D\xi'
 \end{align*}
where $n_1:=(1,0)\in \R\times\R^{\nu-1}$, and furthermore, abusing the notation, we have replaced $R_\kappa(x,x')$ by $R_\kappa(|x-x'|)$\ and introduced the symbol $\eta_{ij}:= (y_i-y_i^{(0)} - y_j+y_j^{(0)})_1$ for the perturbation shift along the first axis. The latter can be expressed through relative shifts of the neighboring points, $\delta_j:= (y_{j+1}-y_j)_1-a$; we obviously have
\begin{equation*}
\eta_{ij} = \sum_{r=\min(i,j)}^{\max(i,j)-1} \delta_r
\end{equation*}
and we note that with respect to the summation indices appearing in \eqref{trial_ineq2}, this quantity has zero mean. Indeed, by rearrangements we get
\begin{align*}
\sum_{i,j\in\Z} \eta_{ij} & = \sum_{l=1}^\infty \Big( \sum_{j\in\Z}\, \sum_{r=j-l}^{j-1} \delta_r + \sum_{j\in\Z} \sum_{r=j}^{j+l-1} \delta_r \Big) = \sum_{l=1}^\infty \sum_{j\in\Z}\, \sum_{r=j-1}^{j+l-1} \delta_r \\[.3em]
& = \sum_{l=1}^\infty \sum_{r\in\Z}\, \sum_{j=r-l+1}^{r+l} \delta_r = \sum_{l=1}^\infty 2l\,\sum_{r\in\Z} \delta_r = 0,
\end{align*}
because the last sum vanished due the locality of the perturbation. This suggests to employ Jensen inequality to estimate the expression \eqref{trial_ineq2}. We cannot use the convexity of $R_\kappa$ directly, however, because the perturbation enters its argument through the Euclidean distance between the integration variables in the above expression. Fortunately, there is a region where the convexity is preserved:
\begin{lemma} \label{l:comp_convex}
The function $x\mapsto R_\kappa\big(\sqrt{(b+x)^2+c^2}\,\big)$ with $b>0$ and a real $c$ is strictly convex in $(0,\infty)$ for any $\kappa>0$ provided that $c\le b\sqrt{\nu-1}$.
\end{lemma}
\begin{proof}
Given that $R_\kappa(\cdot)$ is convex and decreasing, the strict convexity of its composition with $d:\,z(x)=\sqrt{(b+x)^2+c^2}$ is equivalent to the inequality
\begin{equation*}
\frac{R''_\kappa(z(x))}{|R'_\kappa(z(x))|} > \frac{z''(x)}{(z'(x))^2}.
\end{equation*}
To estimate the left-hand side from below, we use the explicit form of resolvent kernel. For $\nu = 3$, where $R\kappa(z) = \frac{\e^{-\kappa z}}{4\pi z}$, this is elementary, the expression being
\begin{equation*}
\frac{R''_\kappa(z)}{|R'_\kappa(z)|} = \frac{1}{z}\, \frac{1+(1+\kappa z)^2}{1+\kappa z} > \frac{2}{z}.
\end{equation*}
For a general $\nu\ge 2$ we have $R_\kappa(z) = (2\pi)^{-\nu/2} \big(\frac{\kappa}{z}\big)^\eta K_\eta(\kappa z)$, where $\eta= \frac{\nu}{2}-1$, and the expression in question can be written as $-\frac{\D}{\D z}\ln|R'_\kappa(z)|$. The derivative can be evaluated explicitly being negative as expected,
\begin{align*}
(2\pi)^{\nu/2} & \frac{\D}{\D z} R_\kappa(z) = -\frac{\eta\kappa^\eta}{z^{\eta+1}} K_\eta(\kappa z) + \big(\textstyle{\frac{\kappa}{z}}\big)^\eta K'_\eta(\kappa z) \\
& = -\big(\textstyle{\frac{\kappa}{z}}\big)^\eta \Big[ \frac{\eta}{z} K_\eta(\kappa z) +\kappa K_{\eta-1}(\kappa z) +\frac{\eta}{z} K_\eta(\kappa z)\Big] \\
& = -\kappa\big(\textstyle{\frac{\kappa}{z}}\big)^\eta \Big[ \frac{2\eta}{\kappa z} K_\eta(\kappa z) +K_{\eta-1}(\kappa z) \Big] = -\kappa^{\eta+1} z^{-\eta} K_{\eta+1}(\kappa z),
\end{align*}
where we have used successively the relations $K'_\eta(u) = -K_{\eta-1}(u)-\frac{\eta}{u}K_\eta(u)$ and $K_{\eta+1}(u) = K_{\eta-1}(u) +\frac{2\eta}{u}K_\eta(u)$ \cite[9.6.26]{AS}. Differentiating the loga\-rithm of the right-hand side and using the mentioned expression of the Macdonald function derivative again, we get
\begin{equation*}
\frac{\D}{\D z}\ln|R'_\kappa(z)| = -\frac{\eta}{z} +\kappa \frac{K'_{\eta+1}(\kappa z)}{K_{\eta+1}(\kappa z)} = -\frac{\nu-1}{z} -\kappa \frac{K_\eta(\kappa z)}{K_{\eta+1}(\kappa z)},
\end{equation*}
because $\nu-1=2\eta-1$, and since $\kappa(K_\eta/K_{\eta+1})(\kappa z) > 0$, we arrive at the bound
 \begin{equation} \label{conv_bound}
\frac{R''_\kappa(z)}{|R'_\kappa(z)|} = -\frac{\D}{\D z}\ln|R'_\kappa(z)| > \frac{\nu-1}{z}.
\end{equation}
On the other hand,
\begin{equation*}
\frac{z''(x)}{(z'(x))^2} = \frac{c^2}{z(x)(b+x)^2}\,;
\end{equation*}
comparing the two expressions we get the sought bound.
\end{proof}

\noindent \emph{Proof of Theorem~\ref{thm:local_pert_bs}, concluded:} Using the dominated convergence argument again we can interchange the double integral with the sum. Since $\phi_0$ is positive and $V$ is nonnegative and nonzero, it is sufficient to check that the sum of the square brackets in the above explicit expression for the terms of the series \eqref{trial_ineq2} over all pair of mutually different indices $i,j$ is nonnegative any fixed $\xi,\xi'$, and it is positive unless $\eta_{ij}=0$. We decompose the integration variables into the longitudinal and transversal part, $\xi=\xi_1+\xi_\perp$. By assumption we have $|\xi_\perp- \xi'_\perp| \le 2R$, the longitudinal parts are all positive and larger than the smallest distance of the neighboring sets $\Sigma_{\rho,R}(y_i)$ and $\Sigma_{\rho,R}(y_{i+1})$, that is, larger than $2\rho$. We can thus apply Lemma~\ref{l:comp_convex} with $b=2\rho$ and $c=2R$; this concludes the proof.
\end{proof}

\begin{remark}
Note that that theorem assumption cover arrays of spherical potential wells and that the restriction becomes weaker for higher dimensions. The condition we obtained for the existence of the discrete spectrum below the threshold of the unperturbed array is, however, sufficient but no means necessary, for several reasons. First of all, the estimate made in the proof of Lemma~\ref{l:comp_convex} is rather rough: we neglected the term $\kappa(K_\eta/K_{\eta+1})(\kappa z)$ which is not only positive but larger than $\kappa$ which means that the bound in the theorem can be replaced, e.g., by $R\le \rho\sqrt{\nu-1+\kappa\rho}$, and since we consider here only $\kappa$ such that $-\kappa^2< \inf\sigma_\mathrm{ess}(H_{V,Y_0})$, the stronger the potential $V$ is, the weaker is the restriction on the aspect ratio of the potential support. Secondly, if $R_1$ satisfies the assumptions of the theorem so that the integral over transversal variables with $|\xi_\perp- \xi'_\perp| \le 2R_1$ produces a positive contribution to the expression \eqref{trial_ineq2} and the actual $R$ is larger than $R_1$, the said positive quantity may still outweigh the remaining part, especially if $R-R_1$ is small. We believe that the restriction we have obtained comes from our proof method and could be disposed of.
\end{remark}

\section{Other local perturbations}
\label{s:other}

While some local perturbations moving the points of $Y_0$ away from the array axis may preserve the spectral threshold of $H_{V,Y}$ as shown in Example~\ref{ex:puretrans}, in general they may give rise to bound states as long as their transverse component is not too large.
 \begin{proposition} \label{prop:skew}
To a given local longitudinal perturbation $Y\ne Y_0$ there is an $\varepsilon=\varepsilon(Y)$ such that $\sigma_\mathrm{disc}(H_{V,Y'})\ne\emptyset$ provided that $|y_{j,\perp} -y'_{j,\perp}|<\varepsilon$ holds for all $y_j\in P$.
 \end{proposition}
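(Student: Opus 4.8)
The plan is to show that the strict Birman--Schwinger inequality underlying Theorem~\ref{thm:local_pert_bs} is stable under small transverse displacements of the wells, so that the bound state it produces survives. We keep the assumption $R\le\rho\sqrt{\nu-1}$ of that theorem, and we read the perturbation as follows: $Y'$ has the same longitudinal geometry as $Y$, i.e.\ $y'_{j,1}=y_{j,1}$ for all $j$, it coincides with $Y$ outside the finite index set $F$ of wells lying in $P$, and $|y'_{j,\perp}|<\varepsilon$ for $j\in F$; in particular the supports of $V_{Y'}$ are again pairwise disjoint, their longitudinal projections being unchanged, and $Y'$ is a local perturbation of $Y_0$, so $\inf\sigma_\mathrm{ess}(H_{V,Y'})=\inf\sigma_\mathrm{ess}(H_{V,Y_0})=-\kappa_0^2$ by Proposition~\ref{prop:spess}. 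By the proof of Theorem~\ref{thm:local_pert_bs} there is a fixed $n$ for which the trial function $\phi:=h_n\phi_0$ satisfies
\[
\delta:=(\phi,K_{V,Y}(-\kappa_0^2)\phi)-\|\phi\|^2>0 .
\]
As a trial function for $K_{V,Y'}(-\kappa_0^2)$ I would take the function $\phi'$ whose restriction to $\Sigma_{\rho,R}(y'_j)$ is $h_{n,j}\phi_0(\cdot-y'_j)$; since on each block this is merely a translation of the corresponding block of $\phi$, one has $\|\phi'\|=\|\phi\|$.

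The heart of the matter is the continuity estimate $\bigl|(\phi',K_{V,Y'}(-\kappa_0^2)\phi')-(\phi,K_{V,Y}(-\kappa_0^2)\phi)\bigr|\le C(Y)\,\varepsilon$. Writing both quadratic forms in the manner of \eqref{BSform_expl} and reducing every block to $\Sigma_{\rho,R}(0)$ by the substitutions $x=\xi+y_i$, resp.\ $x=\xi+y'_i$, the diagonal terms ($i=j$) are identical because $K_{V,Y}^{(i,i)}(-\kappa_0^2)$ is the single-well Birman--Schwinger operator and does not see the well position. Hence the difference is a double sum, over the pairs $(i,j)$ with $i\ne j$ and $i\in F$ or $j\in F$, of the integrals over $\Sigma_{\rho,R}(0)\times\Sigma_{\rho,R}(0)$ of
\[
\phi_0(\xi)V^{1/2}(\xi)\,\bigl[R_{\kappa_0}(|y'_i-y'_j+\xi-\xi'|)-R_{\kappa_0}(|y_i-y_j+\xi-\xi'|)\bigr]\,V^{1/2}(\xi')\phi_0(\xi') .
\]
For $i\ne j$ both arguments of $R_{\kappa_0}$ are bounded below by the minimal inter-well distance $d_{\min}(Y)>0$, hence stay away from its singularity, while they differ by at most $|(y'_i-y_i)-(y'_j-y_j)|\le 2\varepsilon$; by the mean value theorem, using that $|R'_{\kappa_0}|$ is bounded and decays exponentially on $[d_{\min}(Y),\infty)$ and that the relevant distances grow linearly in $|i-j|$, the bracket is bounded by $c\,\e^{-\mu|i-j|}\,\varepsilon$ for suitable $c,\mu>0$. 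Summing this against the summable weights $|h_{n,i}h_{n,j}|$ over the finite cross $\{i\in F\}\cup\{j\in F\}$, together with $\int_{\Sigma_{\rho,R}(0)}V^{1/2}\phi_0<\infty$, yields the asserted bound with a finite $C(Y)$.

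Combining the two displays, for $\varepsilon<\varepsilon(Y):=\delta/C(Y)$ we obtain
\[
(\phi',K_{V,Y'}(-\kappa_0^2)\phi')-\|\phi'\|^2\ \ge\ \delta-C(Y)\,\varepsilon\ >\ 0 ,
\]
that is, $\sup\sigma\bigl(K_{V,Y'}(-\kappa_0^2)\bigr)>1$. From here the conclusion follows exactly as in the proof of Theorem~\ref{thm:local_pert_bs}: by Proposition~\ref{prop:BS} the family $\kappa\mapsto K_{V,Y'}(-\kappa^2)$ is norm-continuous, strictly decreasing and its norm tends to $0$, so there is $\kappa_1>\kappa_0$ with $1\in\sigma_\mathrm{disc}(K_{V,Y'}(-\kappa_1^2))$, whence $-\kappa_1^2\in\sigma_\mathrm{disc}(H_{V,Y'})$ lies below $\inf\sigma_\mathrm{ess}(H_{V,Y'})=-\kappa_0^2$. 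I expect the one genuinely delicate point to be the uniformity in $\varepsilon$ of the cross-sum in the continuity estimate: one must ensure that the tails in $|i-j|$ are controlled independently of the small shifts, which is precisely where the exponential decay of the resolvent kernel enters; everything else is bookkeeping of the kind already carried out, in a more demanding form, in the proofs of Propositions~\ref{prop:BS} and \ref{prop:spess} and of Theorem~\ref{thm:local_pert_bs}.
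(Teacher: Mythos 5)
Your argument is correct and is in essence the paper's own proof: the paper simply observes that the expression \eqref{trial_ineq2} is a continuous function of the positions of the points of $Y$, so its positivity, established for the longitudinal perturbation, survives sufficiently small transverse shifts. You have made the same continuity argument quantitative --- freezing the mollifier index $n$, translating the trial function blockwise, and bounding the change of the Birman--Schwinger quadratic form by $C(Y)\varepsilon$ via the mean value theorem and the exponential decay of the resolvent kernel --- which is a legitimate, more explicit rendering of the paper's one-line proof.
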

\begin{proof}
The expression \eqref{trial_ineq2} is a continuous function of the positions of the points of $Y$, hence being positive for $Y$, it remains to be such for small enough transversal shifts of the point of $Y\setminus Y_0$.
\end{proof}

In reality, however, the bound state existence extends beyond this perturbative regime. To illustrate this, we present the result of a numerical computation in which an equidistant array of two-dimensional circular wells of the shape given by
 \begin{equation} \label{well_example}
V(x) = -V_0\, \exp\!\Big\{-\frac{1}{2\sigma^2}|x|^2\Big\}\, \chi_{|x|\le R}(x)
\end{equation}
with the parameters $\sigma=0.5$, $V_0 = 5$, $R=1$, and the well center spacing $a=5$, is perturbed by a displacement of one of them. Figure~\ref{fig:moving_well}
 \begin{figure}[t]
 \centering
 \includegraphics[scale=0.50]{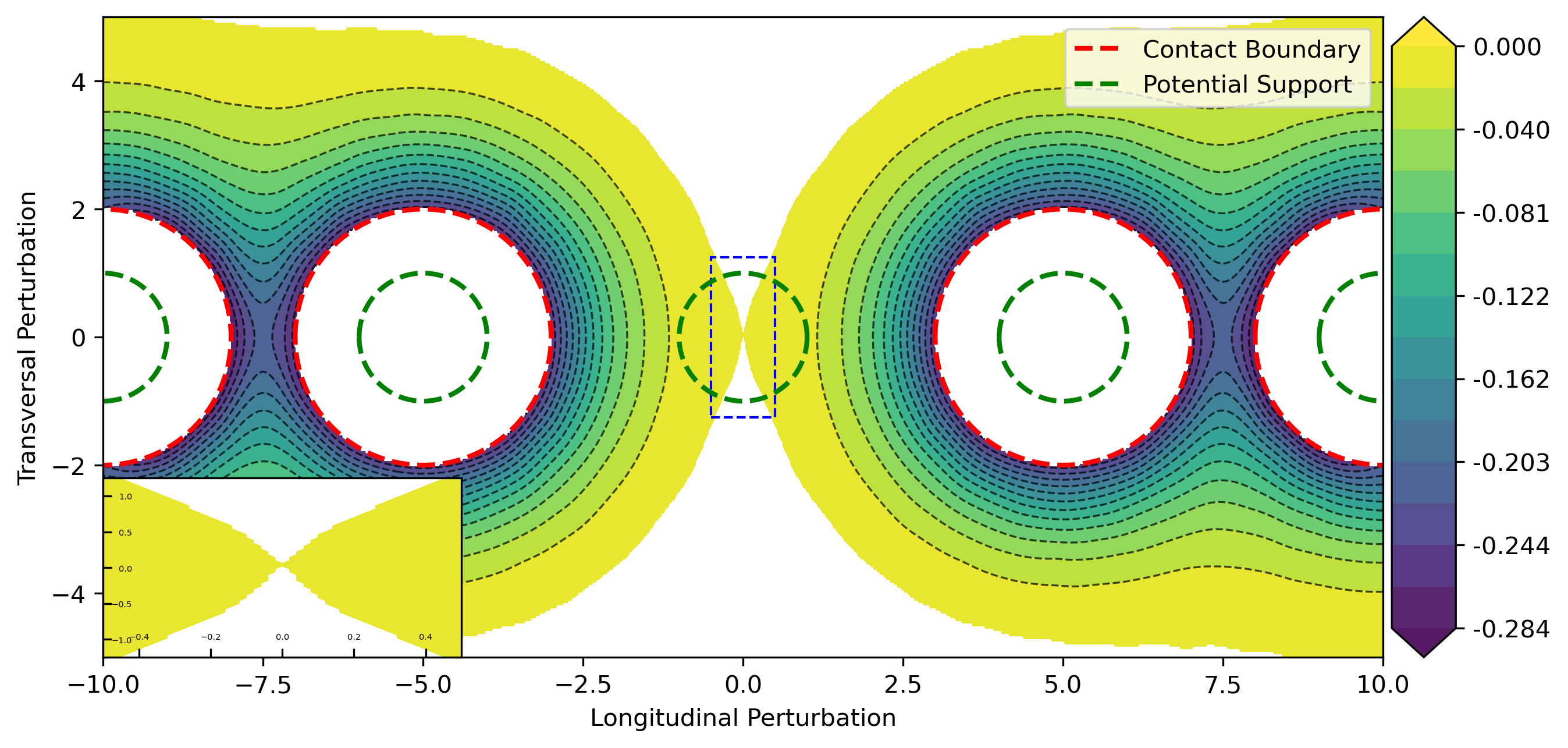}
 \caption{Bound state resulting from a displacement of one potential well with the indication of the binding energy; in the inset the longitudinally expanded neighborhood of the coordinate origin is shown}
 \label{fig:moving_well}
 \end{figure}
shows the positions of the displaced well center which gives rise to a bound state with the indication of the corresponding binding energy. The computation was done on the finite array of eleven wells; imposing Dirichlet and Neumann conditions at the ends of the chain we were able to ensure that the error coming from the cut-off is already negligible.

The boundary of the eigenvalue existence area has a linear asymptotics at the unperturbed position as the inset shows. We note also that, while it goes beyond our assumptions and we do not show it here, the discrete spectrum may exist also if the wells can overlap, especially in the situation when the potential on their intersection is the sum of the constituents.

\section{Concluding remarks}
\label{s:concl}

The existence result we have obtained is for sure not optimal. As we have mentioned we expect that the aspect ratio bound is not necessary, and one can also conjecture that the result will carry over to situations where the family of local shift parameters $\{\delta_j\}$ will decay sufficiently fast instead of being of compact support. Another hypothesis which is most probably unnecessary is the vertical slicing of the well array into parallel copies of the slab $S_a$. The periodicity cell need not have such a simple geometry, and in dimensions $\nu\ge 3$ it even need not be simply connected; imagine the potential supports in the form of an interlocked chain of annular tubes.

The existence of eigenvalues at the spectrum bottom is naturally not the only question one can ask. Other points of interest concern eigenvalues in the spectral gaps of $H_{V,Y_0}$ and their properties, in particular, their dependence on the `individual' potential $V$. The same questions can be asked in situations when $Y$ is not an array but a lattice in $\R^\nu$, the case $\nu=2$ being of a particular interest. True, we have the results mentioned in the opening, but they do not apply here: local shifts in the potential well lattice are not suited for solution by separation of variables as in \cite{FK98}, and as potential perturbations in sense of \cite{ADH89} they are sign indefinite.

\begin{funding}
The work was partially supported by the European Union's Horizon 2020 research and innovation programme under the Marie Sk\l odowska-Curie grant agreement No 873071.
\end{funding}


\begin{thebibliography}{99}

\bibitem[AS]{AS}
M.S.~Abramowitz, I.A.~Stegun, eds.: \emph{Handbook of Mathematical Functions}, Dover, New York 1965.
\bibitem[AF03]{AF03}
R.A.~Adams, J.J.F.~Fournier: \emph{Sobolev Spaces}, 2nd ed., Academic Press, New York, 2003.
\bibitem[AS82]{AS82}
M.~Aizenman, B.~Simon: Brownian motion and Harnack inequality for Schr\"odinger operators, \emph{Commun. Pure Appl. Math.} \textbf{35} (1982), 209--273.
\bibitem[ADH89]{ADH89}
S.~Alama, P.A.~Deift, R.~Hempel: Eigenvalue branches of the Schr\"odinger operator $H-\lambda W$ in a gap of $\sigma(H)$, \emph{Commun. Math. Phys.} \textbf{121} (1989), 291--321.
\bibitem[Ba52]{Ba52}
V.~Bargmann: On the number of bound states in a central field of force, \emph{Proc. Nat. Acad. Sci. U.S.A.} \textbf{38} (1952)
961--966.
\bibitem[BEG22]{BEG22}
J.~Behrndt, A.F.M. ter Elst, F.~Gesztesy: The generalized Birman-Schwinger principle, \emph{Trans. Am. Math. Soc.} \textbf{375} (2022), 799--845.
\bibitem[BGRS97]{BGRS97}
W.~Bulla, F.~Gesztesy, W.~Renger, B.~Simon: Weakly coupled bound states in quantum waveguides, \emph{Proc. Amer. Math. Soc.} \textbf{127} (1997), 1487--1495.
\bibitem[Co97]{Co97}
J.B.~Convay: \emph{A Course in Functional Analysis}, 2nd. ed., Springer, New York 1997
\bibitem[DH86]{DH86}
P.A.~Deift, R.~Hempel: On the existence of eigenvalues of the Schr\"odinger operator $H-\lambda W$ in a gap of $\sigma(H)$, \emph{Commun. Math. Phys.} \textbf{103} (1986), 461--490.
\bibitem[Ex24]{Ex24}
P.~Exner: Geometry effects in quantum dot families, \emph{Pure Appl. Funct. Anal.} \textbf{9} (2024), 1065--1080.
\bibitem[EV24]{EV24}
P.~Exner, S.~Vugalter: Bound states in bent soft waveguides, \emph{J. Spect. Theory} \textbf{14} (2024), 427--457.
\bibitem[Fa89]{Fa89}
S.~Fassari: Spectral properties of the Kronig-Penney Hamiltonian with a localised impurity, \emph{J. Math. Phys.} \textbf{30} (1989), 1385--1392.
\bibitem[FK98]{FK98}
S.~Fassari, M.~Klaus: Coupling constant thresholds of perturbed periodic Hamiltonians, \emph{J. Math. Phys.} \textbf{39} (1998), 4369--4416.
\bibitem[GS93]{GS93}
F.~Gesztesy, B.~Simon: A short proof of Zheludev's theorem, \emph{Trans. Am. Math. Soc.} \textbf{335} (1991), 329--340.
\bibitem[JP51]{JP51}
R.~Jost, A.~Pais: On the scattering of a particle by a static potential, \emph{Phys. Rev.} \textbf{82} (1951), 840--850.
\bibitem[KS79]{KS79}
M.~Klaus, B.~Simon: Binding of Schr\"odiner particles through conspiracy of potental wells, \emph{Ann. Inst. H.~poincar\'{e}}, \textbf{A30} {1979}, 83--87.
\bibitem[RS]{RS}
M.~Reed, B.~Simon: \emph{Methods of Modern Mathematical Physics, II.~Fourier Analysis. Self-adjointness, IV.~Analysis of Operators}, Academic Press, New York 1975, 1978.
\bibitem[Se71]{Se71}
Z.~Semadeni: \emph{Banach Spaces of Continuous Function}, Monografie Matematyczne, vol.~55, PWN, Warsaw 1971.
\bibitem[Si76]{Si76}
B.~Simon: The bound state of weakly coupled Schr\"odinger ope\-rators in one and two dimensions, \emph{Ann. Phys.} \textbf{97} (1976), 279--288.
\bibitem[Si82]{Si82}
B.~Simon: Schr\"odinger semigroups, \emph{Bull. Am. Math. Soc.} \textbf{7} (1982), 447--526.
\bibitem[Zh71]{Zh71}
V.A.~Zheludev: Perturbation of the spectrum of the one-dimensional Schr\"odinger operator with a periodic potential, in
\emph{Topics in Mathematical Physics, vol.~4} (M.Sh.~Birman, ed.), Consultants Bureau, New York 1971; pp.~55--75.
\end{thebibliography}
\end{document}